\documentclass[preprint,11pt, english]{elsarticle}
\usepackage{amsmath}
\usepackage{latexsym, amssymb}
\usepackage{amsthm}
\usepackage{color}
\usepackage{txfonts}
\usepackage{calrsfs}

\usepackage{graphics}
\usepackage{rotating}
\usepackage{comment}

\input xy
\input xyidioms.tex
\usepackage{xy}
\xyoption{all} %

\newtheorem{thm}{Theorem}[section] 

\newtheorem{cor}[thm]{Corollary}
\newtheorem{lem}[thm]{Lemma}
\newtheorem{prop}[thm]{Proposition}

\theoremstyle{definition}
\newtheorem{rem}[thm]{Remark}

\newcommand\operA[2]{{\if!#2!\operatorname{#1}\else{\operatorname{#1}_{#2}^{\phantom{I}}}\fi}} 

\newcommand\Cref[1]{{Corollary~\ref{#1}}}%

\renewcommand{\phi}{\varphi}

\renewcommand{\phi}{\varphi}

\newcommand{\Trace}[1][]{\if!#1!\operatorname{Tr}\else{\operatorname{Tr}_{#1}^{\phantom{I}}}\fi} 

\long\def\forget#1\forgotten{{}} %

\def\({\left(}
\def\){\right)}

\newif\iffurther
\furtherfalse

\newif\ifXY 
\XYtrue     
\ifXY

\input xy
\input xyidioms.tex
\usepackage{xy}
\xyoption{all} %
\fi 

\usepackage{babel}

\begin{document}

\begin{frontmatter}

\title{Sums of two symbols in $K_2(F)/2K_2(F)$ in characteristic two}

\author[1]{Demba Barry}
\ead{barry.demba@gmail.com}

\author[3]{Adam Chapman}
\ead{adam1chapman@yahoo.com}

\author[4]{Ahmed Laghribi}
\ead{ahmed.laghribi@univ-artois.fr}

\address[1]{Facult\'e des Sciences et Techniques de Bamako, BP: E3206 Bamako, Mali}

\address[3]{School of Computer Science, Academic College of Tel-Aviv-Yaffo, Rabenu Yeruham St., Yaffo 86162, Israel}

\address[4]{Universit\'e d'Artois, Facult\'e des Sciences Jean Perrin, Laboratoire de math\'ematiques de Lens EA 2462, rue Jean Souvraz - SP18, 62307 Lens, France}

\begin{abstract}
In this paper, study sums $A=\{a,b\}_2+\{c,d\}_2$ of two symbols in $K_2(F)/2K_2(F)$ when $\operatorname{char}(F)=2$. We first prove a chain lemma that connects $A$ to $B=\{\alpha,\beta\}_2+\{\gamma,\delta\}_2$ by a finite sequence of small steps when $A \equiv B$. We use this lemma to prove that $\{a,b,c,d\}_2 \in K_4(F)/2K_4(F)$ is a well-defined invariant of $A$, and that this invariant is trivial if and only if $A$ is congruent to a single symbol in $K_2(F)/4K_2(F)$. We also bound the symbol length of $C$ in $K_2(F)/2^m K_2(F)$ from above when $C$ is the sum of up to four symbols in $K_2(F)/2^{m+1}K_2(F)$.
\end{abstract}

\begin{keyword}
K-Theory; Symmetric Bilinear Forms; Fields of characteristic 2
\MSC[2010] primary 11E81; secondary 11E04, 19D45
\end{keyword}
\date{\today}
\end{frontmatter}

\section{Introduction}

The mod 2 $K$-theory of fields $F$ of characteristic 2 gained significance the moment Kato showed in \cite{Kato:1982} that $K_n(F)/2K_n(F)$ is isomrophic to $I^n F/I^{n+1} F$ where $I F$ is the fundamental ideal of even-dimension forms in the Witt group $WF$ of symmetric bilinear forms over $F$.

In this paper we focus on sums $A=\{a,b\}_2+\{c,d\}_2$ of two symbols in $K_2(F)/2K_2(F)$.
 We first prove a chain lemma that connects $A$ to $B=\{\alpha,\beta\}_2+\{\gamma,\delta\}_2$ by a finite sequence of small steps when $A \equiv B$. We use this lemma to prove that $\{a,b,c,d\}_2 \in K_4(F)/2K_4(F)$ is a well-defined invariant of $A$, and that this invariant is trivial if and only if $A$ is congruent to a single symbol in $K_2(F)/4K_2(F)$. We also bound the symbol length of $C$ in $K_2(F)/2^m K_2(F)$ from above when $C$ is the sum of up to four symbols in $K_2(F)/2^{m+1}K_2(F)$.

All the results obtained in this paper have characteristic not 2 analogues,  see \cite{Sivatski:2012}, \cite{Sivatksi:2019},  \cite{Sivatski:2022} and \cite{Chapman:2023}. We refer to these analogous results, and when possible, we point out improvements in the existing literature that can be made by mimicking the technique we present here.

\section{Preliminaries}

Given a field $F$, its Milnor $K$-groups $K_n(F)$ are defined to be the groups of formal sums of symbols $\{\alpha_1,\alpha_2,\dots,\alpha_n\}$ with $\alpha_1,\alpha_2,\dots,\alpha_n \in F^\times$ subject to the following relations:
\begin{itemize}
\item If $\alpha_m=\beta_m$ for all $m \in \{1,\dots,n\}\setminus \{k\}$, then $\{\alpha_1,\dots,\alpha_n\}+\{\beta_1,\dots,\beta_n\}=\{\gamma_1,\dots,\gamma_n\}$ where $\gamma_m=\alpha_m$ for $m\neq k$ and $\gamma_k=\alpha_k \beta_k$.
\item If $\alpha_m=1-\alpha_k$ for some $m,k \in\{1,\dots,n\}$ with $m\neq k$, then $\{\alpha_1,\alpha_2,\dots,\alpha_n\}=0$.
\end{itemize}
A special attention is given to $K_n/2K_n(F)$. The class of  $\{\alpha_1,\alpha_2,\dots,\alpha_n\}$ in $K_n/2K_n(F)$ is denoted by $\{\alpha_1,\alpha_2,\dots,\alpha_n\}_2$. More generally, the class of $\{\alpha_1,\alpha_2,\dots,\alpha_n\}$ in $K_n/2^mK_n(F)$ is denoted by $\{\alpha_1,\alpha_2,\dots,\alpha_n\}_{2^m}$.

In \cite{Kato:1982}, it was proven that when $\operatorname{char}(F)=2$, $K_n(F)/2K_n(F)$ is isomorphic to $I^n F/I^{n+1} F$, by mapping $\{\alpha_1,\alpha_2,\dots,\alpha_n\}_2$ to $\langle \! \langle \alpha_1,\dots,\alpha_n\rangle \! \rangle$. The analogous result in $\operatorname{char}(F)\neq 2$ was proven in \cite{Voevodsky:2003}.

The ring of symmetric bilinear forms modulo metabolic forms is known as the Witt ring, and denoted $W F$. Its fundamental ideal consisting of even-dimensional forms is $I F$. Its $n$th power, $I^n F$, is known to be generated by scalar multiples of $n$-fold Pfister forms $\langle \! \langle \alpha_1,\dots,\alpha_n \rangle \! \rangle$, defined to be $\langle 1,-\alpha_1 \rangle \otimes \dots \otimes \langle 1,-\alpha_n \rangle$. For background, see \cite{EKM}.
\section{Chain Lemma}


Let $\tau \in K_2(F)/2K_2(F)$ be a sum of two symbols, and let $(V,E)$ be a labeled graph whose vertices are all pairs $(S_1,S_2)$ of symbols in $K_2(F)/2K_2(F)$ satisfying $S_1+S_2=\tau$. 

Two vertices $(S_1,S_2)$ and $(T_1,T_2)$ are connected by an edge if there exist $\alpha,\beta,\gamma,\delta \in F^\times$ and $x\in F$ such that: $S_1=\{\alpha,\beta\}_2$, $S_2=\{\gamma,\delta\}_2$, $T_1=\{\alpha,(x^2+\alpha\gamma)\beta\}_2$ and $T_2=\{\gamma,(x^2+\alpha\gamma)\delta\}_2$. If $x=0$ ({\it resp.} $x\neq 0$), then we say that $(S_1,S_2)$ and $(T_1,T_2)$ are connected by an edge of type I ({\it resp.} of type II).
Note that when $x\neq 0$, we can assume $x=1$.

\begin{lem}\
\begin{enumerate}
\item[(1)] If there exist $\alpha,\beta,\gamma,\delta \in F^\times$ and $x,y\in F$ not both zero such that $S_1=\{\alpha,\beta\}_2$, $S_2=\{\gamma,\delta\}_2$, $T_1=\{\alpha,(x^2+\alpha\gamma y^2)\beta\}_2$ and $T_2=\{\gamma,(x^2+\alpha\gamma y^2) \delta\}_2$, then $(S_1,S_2)$ and $(T_1,T_2)$ are either equal or connected by an edge of type I or II.
\item[(2)] If $(S_1,S_2)$ is connected to $(T_1,T_2)$ by an edge of type 2, then there exists a vertex $(R_1,R_2)$ connected to both $(S_1,S_2)$ and $(T_1,T_2)$ by edges of type I.
\end{enumerate}
\end{lem}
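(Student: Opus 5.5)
For part (1) I would split according to which of $x,y$ vanish, using only the fact that squares are invisible in $K_2(F)/2K_2(F)$: $\{a,u^2b\}_2=\{a,b\}_2$ for $u\in F^\times$.
\begin{itemize}
\item If $y=0$, then $x\neq 0$. The multiplier $x^2$ is a square, so $T_1=S_1$ and $T_2=S_2$, and the two vertices are equal.
\item If $x=0$, then $y\neq 0$. The multiplier is $\alpha\gamma y^2$, and dropping $y^2$ leaves exactly the type I edge built from the presentation $(\alpha,\beta),(\gamma,\delta)$.
\item If $xy\neq 0$, write $x^2+\alpha\gamma y^2=y^2\big((x/y)^2+\alpha\gamma\big)$. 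Dropping $y^2$ leaves the type II edge with parameter $x/y$. Since the definition allows any $x\neq0$, this is an edge of type II. (It can be normalised to $x=1$ after rescaling $\alpha,\gamma$, as the paper notes.)
\end{itemize}

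For part (2) I would exploit the freedom of choosing presentations of the vertices. Take the type II edge in normalised form, so $T_1=\{\alpha,(1+\alpha\gamma)\beta\}_2$ and $T_2=\{\gamma,(1+\alpha\gamma)\delta\}_2$. Put $\varepsilon=1+\alpha\gamma$ and $e=\{\alpha,\varepsilon\}_2$. In characteristic two we have $\varepsilon=1-\alpha\gamma$, so $\{\alpha\gamma,\varepsilon\}=0$. Hence $e=\{\gamma,\varepsilon\}_2$ as well, and $T_i=S_i+e$ for $i=1,2$. The aim is an intermediate vertex $(R_1,R_2)$ that is a type I neighbour of $S$ for one presentation and a type I neighbour of $T$ for another. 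I would search for $R$ inside the one-parameter family produced by part (1). For any $x,y$ not both zero, the pair $\big(\{\alpha,(x^2+\alpha\gamma y^2)\beta\}_2,\{\gamma,(x^2+\alpha\gamma y^2)\delta\}_2\big)$ is a vertex joined to $S$. It is joined by a type I edge exactly when $x=0$, i.e.\ for $R^{(0)}=(\{\alpha,\gamma\beta\}_2,\{\gamma,\alpha\delta\}_2)$, using $\{\alpha,\alpha\}_2=0$.

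The key step is then to re-present $T$ so that $R^{(0)}$ appears as its type I neighbour. I would first try to write $T_1=\{\alpha',\beta'\}_2$ and $T_2=\{\gamma',\delta'\}_2$ with $\alpha'=\alpha\varepsilon$ and $\gamma'=\gamma\varepsilon$. This is legitimate in the first slot because $\{\alpha\varepsilon,\varepsilon\}_2=\{\alpha,\varepsilon\}_2+\{\varepsilon,\varepsilon\}_2=e$, since $\{\varepsilon,\varepsilon\}_2=\{\varepsilon,-1\}_2=0$. Thus $T_1=\{\alpha,\beta\}_2+\{\alpha\varepsilon,\varepsilon\}_2$, and likewise $T_2=\{\gamma,\delta\}_2+\{\gamma\varepsilon,\varepsilon\}_2$. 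With $\alpha'\gamma'=\alpha\gamma\varepsilon^2\equiv\alpha\gamma$ modulo squares, the type I step from $T$ multiplies both second entries by $\alpha\gamma$. I would check whether this lands on $R^{(0)}$ via the identities $\{\alpha\gamma,\varepsilon\}_2=0$ and $\{\alpha,\alpha\}_2=0$.

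I expect the main obstacle to be exactly this bookkeeping. I must find genuine single-symbol presentations $T_1=\{\alpha',\beta'\}_2$ and $T_2=\{\gamma',\delta'\}_2$ whose type I move hits the same $R$ as a type I move from some presentation of $S$, not merely a congruence of sums. If the naive choice $\alpha'=\alpha\varepsilon,\gamma'=\gamma\varepsilon$ fails, I would next use the symmetric choice. That means taking the type I neighbours of both $S$ and $T$ with respect to presentations whose first slots are $\alpha$ and $\alpha\gamma\varepsilon$ (respectively $\gamma$ and $\alpha\gamma\varepsilon$), and exploiting that $\alpha\gamma\varepsilon=\alpha\gamma+(\alpha\gamma)^2$ makes $\{\alpha\gamma\varepsilon,\cdot\}$ interact well with $e$.
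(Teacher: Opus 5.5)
Your part (1) is correct and uses the same case split as the paper. Part (2), however, is not proved. You never exhibit $R$, and the route you sketch cannot produce it.

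First, the identity $T_1=\{\alpha,\beta\}_2+\{\alpha\varepsilon,\varepsilon\}_2$ writes $T_1$ as a sum of two symbols with different first entries. It does not give a presentation $T_1=\{\alpha\varepsilon,\beta'\}_2$. Such a presentation would require $\alpha\varepsilon$ to be a value of the pure part $\langle\alpha,\varepsilon\beta,\alpha\varepsilon\beta\rangle$, hence to lie in $F^2(\alpha,\varepsilon\beta)$. This fails for $F=k(\alpha,\beta,\gamma,\delta)$ with $k$ perfect. In the $F^2$-basis of monomials $\alpha^i\beta^j\gamma^k\delta^l$ ($i,j,k,l\in\{0,1\}$), the element $\alpha\varepsilon=\alpha+\alpha^2\gamma$ has a nonzero $\gamma$-coordinate, while no element of $F^2(\alpha,\varepsilon\beta)$ does. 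The same objection kills your fallback with first entry $\alpha\gamma\varepsilon$.

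Second, and more fundamentally, $R^{(0)}$ is the wrong vertex. In this generic field it is not joined to $T$ by a type I edge under any presentation. Indeed $R^{(0)}_i-T_i=D:=\{\alpha,\gamma\varepsilon\}_2=\{\gamma,\alpha\varepsilon\}_2$ for $i=1,2$. A type I edge would need a common slot $a'$ of $T_1$ and $R^{(0)}_1$, and a common slot $c'$ of $T_2$ and $R^{(0)}_2$, with $\{a',c'\}_2=D$.
\begin{itemize}
\item $a'$ lies in $F^2(\alpha,\varepsilon\beta)\cap F^2(\alpha,\gamma)=F^2(\alpha)$ and is a value of $\langle\alpha,\gamma\varepsilon,\alpha\gamma\varepsilon\rangle$, which forces $a'\equiv\alpha$ modulo squares.
\item Likewise $c'\equiv\gamma$ modulo squares.
\item But $\{\alpha,\gamma\}_2=D+e\neq D$, since $e=\{\alpha,1+\alpha\gamma\}_2\neq 0$.
\end{itemize}
So no amount of bookkeeping will close the argument while you keep the fixed presentation $(\alpha,\beta),(\gamma,\delta)$ of $S$.

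The missing idea is to re-present $S$, not $T$, by swapping the entries of $S_1$. With $S_1=\{\beta,\alpha\}_2$ and $S_2=\{\gamma,\delta\}_2$, the type I move (multiplier $\beta\gamma$) gives $R=(\{\alpha\gamma,\beta\}_2,\{\gamma,\beta\delta\}_2)$, using $\{u,u\}_2=0$. Your own identity $\{\alpha\gamma,\varepsilon\}_2=0$ then gives $R_1=\{\alpha\gamma,\beta\varepsilon\}_2$. Now take the presentation $T_1=\{\beta\varepsilon,\alpha\}_2$, $T_2=\{\gamma,\varepsilon\delta\}_2$. The type I move (multiplier $\beta\gamma\varepsilon$) yields $\{\beta\varepsilon,\alpha\gamma\}_2=R_1$ and $\{\gamma,\beta\delta\}_2=R_2$. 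This is exactly the paper's vertex and argument.
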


\begin{proof}\
\begin{enumerate}
\item[(1)] If $y=0$, then $(S_1,S_2)=(T_1,T_2)$. So suppose $y\neq 0$. If $x=0$, then $(S_1,S_2)$ is connected to $(T_1,T_2)$ by an edge of type I, and if $x\neq 0$, by an edge of type 2.
\item[(2)] Suppose that $(S_1,S_2)$ is connected to $(T_1,T_2)$ by an edge of type 2. Then, there exist $\alpha,\beta,\gamma,\delta \in F^\times$ such that $S_1=\{\alpha,\beta\}_2$, $S_2=\{\gamma,\delta\}_2$, $T_1=\{\alpha,\beta(1+\alpha\gamma)\}_2$ and $T_2=\{\gamma,\delta(1+\alpha\gamma)\}_2$.
Now, take $R_1=\{\alpha \gamma,\beta\}_2$ and $R_2=\{\gamma,\beta \delta\}_2$. Then, clearly $(S_1,S_2)$ is connected to $(R_1,R_2)$ by an edge of type I. Seeing that $R_1=\{\alpha \gamma,\beta(1+\alpha \gamma)\}_2$ clarifies why $(R_1,R_2)$ is also connected to $(T_1,T_2)$ by an edge of type I.
\end{enumerate} 
\end{proof}

\begin{thm}\label{chain}
Every two vertices are connected by a path consisting of at most three edges, with at most one of type II and the others of type I. If three edges are necessary, including one of type II, then the one of type II is in between the two edges of type I. If we restrict to edges of type I only, there is a path of length at most 4.
\label{t1}
\end{thm}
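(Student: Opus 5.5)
The plan is to translate the statement into symmetric bilinear forms via Kato's isomorphism $K_2(F)/2K_2(F)\cong I^2F/I^3F$. Then I would show that two arbitrary vertices can both be moved, by type I edges, to vertices sharing enough slots that one more edge joins them. Throughout I use two identities in $K_2(F)/2K_2(F)$ that follow from $\operatorname{char}(F)=2$ and bilinearity: $\{u,u\}_2=\{u,-1\}_2=0$, and $\{u,x^2+uy^2\}_2=0$. The second holds because $x^2+uy^2$ is a value of $\langle\!\langle u\rangle\!\rangle$.

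First I would record the elementary moves available at a vertex $(\{\alpha,\beta\}_2,\{\gamma,\delta\}_2)$.
\begin{itemize}
\item Each symbol may be rewritten freely: permute its entries, replace $\{\alpha,\beta\}_2$ by $\{\alpha,\alpha\beta\}_2$ or $\{\alpha\beta,\beta\}_2$, or multiply an entry by a value of the binary form attached to the other entry.
\item A type I edge replaces the pair by $(\{\alpha,\gamma\beta\}_2,\{\gamma,\alpha\delta\}_2)$.
\item A type II edge multiplies both second entries by $1+\alpha\gamma$.
\end{itemize}
The edge relation is symmetric: applying the same $x,y$ again multiplies the second entries by a square. Part (2) of the preceding Lemma turns any type II edge into two type I edges. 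So the last sentence of Theorem \ref{chain} (length at most $4$ using only type I edges) follows from the first sentence, provided the type II edge sits in the middle of a $3$-edge path. I would therefore concentrate on producing paths of the shape I--II--I, or shorter.

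Next, fix two vertices $(S_1,S_2)$ and $(T_1,T_2)$ with $S_1+S_2=T_1+T_2=\tau$. Then $S_1+T_1=S_2+T_2$ in $K_2(F)/2K_2(F)$. The strategy is to find a common slot.

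Pass to the $6$-dimensional forms $\varphi=\langle\!\langle a,b\rangle\!\rangle'\perp\langle\!\langle \alpha,\beta\rangle\!\rangle'$ and $\psi=\langle\!\langle c,d\rangle\!\rangle'\perp\langle\!\langle\gamma,\delta\rangle\!\rangle'$, built from pure subforms. They are congruent modulo $I^3F$. Using the Arason--Pfister Hauptsatz for bilinear forms, together with a dimension count on $\varphi\perp\psi$, I would deduce an isotropy statement: there is an element $e$ represented by $\langle a,b,ab\rangle$ and by $\langle \alpha,\beta,\alpha\beta\rangle$, up to adjusting by the other pair. That lets me rewrite $S_1=\{e,b'\}_2$ and $T_1=\{e,\beta'\}_2$.

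With the first symbols linked, $S_2+T_2=\{e,b'\beta'\}_2$ is a single symbol. Rewriting the second symbols in the same way gives a second common slot $g$, with $S_2=\{g,d'\}_2$ and $T_2=\{g,\delta'\}_2$. The goal is then to express the ratio $b'\beta'$ as $x^2+egy^2$ up to corrections. Such corrections are absorbed by one type I edge at each end: a type I edge exchanges $e$ against $eg$ and rescales the second entries by $g$ and $e$ respectively. Part (1) of the Lemma then shows the middle step is a single edge, of type I or II, or trivial.

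The step I expect to be the main obstacle is the linkage step: producing the common slot $e$ and simultaneously a compatible slot $g$ for the second symbols. Bilinear $2$-fold Pfister forms in characteristic $2$ need not be linked, so the Hauptsatz argument must be carried out on $\varphi\perp\psi$ rather than on each pair separately. If the dimension count is insufficient, I would first try a differential-form computation via Kato's map $\{u,v\}_2\mapsto \frac{du}{u}\wedge\frac{dv}{v}$. That turns $\tau$ into a $2$-form, and the requirement becomes finding a common logarithmic $1$-form factor. I would then verify that the resulting coefficients can be chosen so that the middle correction is exactly a value of $\langle 1,eg\rangle$.
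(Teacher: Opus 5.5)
\textbf{There is a genuine gap: the linkage step fails as stated, and it is where the content of the theorem lies.}

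You want a common slot $e$ of $S_1$ and $T_1$ (and then $g$ of $S_2$ and $T_2$) at the \emph{original} vertices. Type I edges enter only afterwards, as ``corrections''. This is false in general. Take $(S_1,S_2)=(\{a,b\}_2,\{c,d\}_2)$ and $(T_1,T_2)=(\{c,d\}_2,\{a,b\}_2)$ with $\langle a,b,ab,c,d,cd\rangle$ anisotropic (e.g.\ $F=k(a,b,c,d)$ with $k$ perfect of characteristic $2$).
\begin{itemize}
\item A common slot of $S_1$ and $T_1$ would make this Albert form isotropic.
\item More generally, if both pairs were linked, your final step (done as below) would put the vertices at distance at most $1$, contradicting Lemma~\ref{l1}.
\end{itemize}
So the phrase ``up to adjusting by the other pair'' hides the whole difficulty.

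Your proposed tool cannot deliver it either. $\varphi\perp\psi$ is a $12$-dimensional form in $I^3F$, and the Hauptsatz only excludes anisotropic dimensions below $8$. For instance, $\langle\!\langle t\rangle\!\rangle\otimes\langle a,b,ab,c,d,cd\rangle$ over $k(a,b,c,d,t)$ is anisotropic of dimension $12$ in $I^3$. Moreover, isotropy of $\varphi\perp\psi$ would only give a common value of $\varphi$ and $\psi$, not of $\langle a,b,ab\rangle$ and $\langle\alpha,\beta,\alpha\beta\rangle$. In the swap example $\varphi\perp\psi\cong\varphi\perp\varphi$ is metabolic, yet no $e$ exists.

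\textbf{The missing idea.} The type I edge at each end must be chosen first, precisely to \emph{create} the common slot.
\begin{itemize}
\item Every value $\alpha x^2+\beta y^2+\alpha\beta(\gamma z^2+\delta w^2+\gamma\delta v^2)$ of the $5$-dimensional form $\langle\alpha,\beta,\alpha\beta\gamma,\alpha\beta\delta,\alpha\beta\gamma\delta\rangle$ can be made the first slot of a type I neighbour of $(\{\alpha,\beta\}_2,\{\gamma,\delta\}_2)$.
\item When the two vertices have the same sum, the $10$-dimensional form $\langle\alpha,\beta,\alpha\beta\gamma,\alpha\beta\delta,\alpha\beta\gamma\delta,a,b,abc,abd,abcd\rangle$ lies in $I^3F$.
\item Its isotropy does not come from the Hauptsatz, which a priori allows anisotropic dimension $10$. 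It comes from the deeper fact that $10$-dimensional forms in $I^3F$ are isotropic (\cite[Proposition 6.5]{ChapmanLaghribiMukhija:2025}).
\item This yields type I neighbours $(\{A,B\}_2,\{C,D\}_2)$ and $(\{A,L\}_2,\{M,N\}_2)$ with a common first slot $A$.
\end{itemize}

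Only then does your second linkage apply. Since $\{C,D\}_2+\{M,N\}_2=\{A,BL\}_2$ is a symbol, the Albert form $\langle C,D,CD,M,N,MN\rangle$ is isotropic. This is a separate known result that you must invoke. It lets one take $M=C$.

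Finally, take a common slot $P$ of $\{A,BL\}_2=\{C,DN\}_2$. It satisfies $\{AC,P\}_2=0$, so $P=\mu^2+AC\nu^2$. Rechoosing $B$ and $D$ gives $L=BP$ and $N=DP$. Part (1) of the Lemma then makes the middle step a single edge, with no further corrections. The length-$4$ bound follows from part (2) of the Lemma wherever the type II edge sits.
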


\begin{proof}
Suppose $\{\alpha,\beta\}_2+\{\gamma,\delta\}_2=\{a,b\}_2+\{c,d\}_2$.
Then the symmetric bilinear form $$\varphi=\langle \alpha,\beta,\alpha \beta \gamma,\alpha \beta \delta,\alpha \beta \gamma \delta, a,b,abc,abd,abcd \rangle$$ is a 10-dimensional form in $I^3F$. By \cite[Proposition 6.5]{ChapmanLaghribiMukhija:2025}, $\varphi$ is isotropic.
Therefore, $\alpha x^2+\beta y^2+\alpha \beta (z^2\gamma+w^2\delta +v^2\gamma \delta)=a q^2+b r^2+ab (c s^2+d t^2+cdu^2)$ for some $q,r,s,t,u,v,w,x,y,z \in F$, not all zero.

Now, $\{\alpha,\beta\}_2+\{\gamma,\delta\}_2=\{\alpha x^2+\beta y^2,\alpha \beta\}_2+\{z^2\gamma+w^2\delta +v^2\gamma \delta,\gamma\delta(z^2 \gamma+w^2 \delta)\}_2\\=
\{\alpha x^2+\beta y^2,\alpha \beta(z^2\gamma+w^2\delta +v^2\gamma \delta)\}_2+\{z^2\gamma+w^2\delta +v^2\gamma \delta,\gamma\delta(z^2 \gamma+w^2 \delta)(\alpha x^2+\beta y^2)\}_2
\\=\{\alpha x^2+\beta y^2+\alpha \beta(z^2\gamma+w^2\delta +v^2\gamma \delta),\alpha \beta(z^2\gamma+w^2\delta +v^2\gamma \delta)(\alpha x^2+\beta y^2)\}_2+\{z^2\gamma+w^2\delta +v^2\gamma \delta,\gamma\delta(z^2 \gamma+w^2 \delta)(\alpha x^2+\beta y^2)\}_2$.

Similarly, $\{a,b\}_2+\{c,d\}_2=\{a q^2+b r^2,ab\}_2+\{c s^2+dt^2 +cdu^2,cd(c s^2+d t^2)\}_2\\=
\{a q^2+b r^2,ab(c s^2+dt^2 +cdu^2)\}_2+\{c s^2+dt^2 +cdu^2,cd(c s^2+dt^2 +cdu^2)(a q^2+b r^2)\}_2
\\=\{a q^2+b r^2+ab(c s^2+dt^2 +cdu^2),ab(c s^2+dt^2 +cdu^2)(a q^2+b r^2)\}_2+\{c s^2+dt^2 +cdu^2,cd(c s^2+dt^2 +cdu^2)(a q^2+b r^2)\}_2$.

Therefore, $(\{\alpha,\beta\}_2,\{\gamma,\delta\}_2)$ is connected to $(\{A,B\}_2,\{C,D\}_2)$ by an edge of type I, and $(\{a,b\}_2,\{c,d\}_2)$ is connected to $(\{A,L\}_2,\{M,N\}_2)$ by an edge of type I.
It is enough to show that $(\{A,B\}_2,\{C,D\}_2)$ is connected to $(\{A,L\}_2,\{M,N\}_2)$ by an edge of either type.
Since $\{A,B\}_2+\{C,D\}_2=\{A,L\}_2+\{M,N\}_2$, we have $\{A,BL\}_2=\{C,D\}_2+\{M,N\}_2$. Therefore, the symmetric bilinear form $$\langle C,D,CD,M,N,MN \rangle$$ is isotropic by \cite[Section 5, Lemma 1]{ChapmanLaghribi:2025}.
Consequently, $C i^2+Dj^2+CDk^2=M\ell^2+Nm^2+MNn^2$, which means $\{C,D\}_2=\{Ci^2+Dj^2+CDk^2,CD(Ci^2+Dj^2)\}_2$ and $\{M,N\}_2=\{M\ell^2+Nm^2+MNn^2,MN(M\ell^2+Nm^2)\}_2$.
It means that we could assume $C=M$ from the start.
Now, $\{A,B\}_2+\{C,D\}_2=\{A,L\}_2+\{C,N\}_2$ implies $\{A,BL\}_2=\{C,DN\}_2$. Hence, there exists $P$ for which $\{A,BL\}_2=\{A,P\}_2=\{C,P\}_2=\{C,DN\}_2$. 
Since $\{AC,P\}_2=0$, $P=\mu^2+AC\nu^2$ for some $\mu,\nu \in F$. From $\{A,BLP\}_2=0$ we conclude that by a suitable choice of $B$, we can assume $L=BP$. Similarly, from $\{C,DN\}_2=0$, by a suitable choice of $D$, we can assume $N=DP$, and that completes the proof.
\end{proof}

\begin{rem}
Note that what we call here an edge of type I is analogous what is called in \cite{Sivatski:2012} ``strongly simply-equivalent". In that paper, the author concluded that in characteristic not 2, every two vertices in this graph are connected by a path of length at most three of edges of either type and then concluded that a path of length at most 6 of edges of type I must exist (\cite[Propositions 1 and 2]{Sivatski:2012}). However, following the same computations as we describe here, his Proposition 2 can be easily improved to a path of length at most 4 instead of 6.
\end{rem}

The following corollary is the characteristic 2 analogue of the main result of \cite{Sivatksi:2019}. It is also a special case of \cite[Appendix]{Kahn:2000}, but it is nice to have an elementary explanation.
\begin{cor}
The map from sums of two symbols in $K_2(F)/2K_2(F)$ to $K_4(F)/2K_4(F)$ given by $\{a,b\}_2+\{c,d\}_2 \mapsto \{a,b,c,d\}_2$ is well-defined.
\end{cor}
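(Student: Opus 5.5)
By Theorem~\ref{chain}, any two representations $\{\alpha,\beta\}_2+\{\gamma,\delta\}_2=\{a,b\}_2+\{c,d\}_2$ of the same element are joined in the graph $(V,E)$ by a finite path, which we may take to use edges of type I only. One subtlety must be handled first. A vertex is an \emph{ordered pair of classes} $(S_1,S_2)$, and the value $\{a,b,c,d\}_2$ must not depend on which representatives $a,b$ of $S_1$ and $c,d$ of $S_2$ we choose. So the plan has two steps: (i) show that $\{a,b,c,d\}_2$ depends only on the vertex $(S_1,S_2)$; (ii) show that it is unchanged along an edge of type I. Together these give well-definedness, since every path is a finite sequence of type I edges.

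For (i) we use the product structure on mod-2 Milnor $K$-theory. The map $K_2(F)/2\times K_2(F)/2\to K_4(F)/2$ sending $(x,y)\mapsto x\cdot y$ is well-defined and bilinear. Since $\{a,b,c,d\}_2=\{a,b\}_2\cdot\{c,d\}_2$, the value depends only on the classes $S_1=\{a,b\}_2$ and $S_2=\{c,d\}_2$. Moreover $S_2\cdot S_1=S_1\cdot S_2$, because graded commutativity holds up to sign and the sign is irrelevant mod 2. Hence the ordering within the pair is also irrelevant.

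For (ii), take a type I edge, written as $S_1=\{\alpha,\beta\}_2$, $S_2=\{\gamma,\delta\}_2$, $T_1=\{\alpha,\alpha\gamma\beta\}_2$, $T_2=\{\gamma,\alpha\gamma\delta\}_2$. Expanding by bilinearity,
\[
T_1\cdot T_2=\{\alpha,\alpha\gamma\beta,\gamma,\alpha\gamma\delta\}_2 .
\]
Every term other than $\{\alpha,\beta,\gamma,\delta\}_2$ contains a repeated entry, of the form $\{\alpha,\ldots,\alpha,\ldots\}$ or $\{\ldots,\gamma,\ldots,\gamma\}$. In characteristic $2$ we have $\{x,x\}=\{x,-1\}=\{x,1\}=0$, so all such terms vanish. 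Even without this, modulo $2$ one has $\{x,x\}_2=\{x,-1\}_2$, and these terms cancel in pairs.

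Therefore $T_1T_2=\{\alpha,\beta,\gamma,\delta\}_2=S_1S_2$, and the invariant is preserved along every edge of type I, which completes the argument. The only step needing care is (i), and it is settled by the bilinearity of the product together with $-1=1$ in characteristic $2$. Type II edges need not be handled separately: Theorem~\ref{chain} already provides a path of type I edges only, or alternatively the earlier lemma shows that each type II edge factors through two type I edges.
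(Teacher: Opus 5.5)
Your proof is correct and follows the paper's argument: use Theorem~\ref{chain} to connect any two vertices by a path of type I edges only, then check invariance along a type I edge via $\{\alpha,\alpha\gamma\beta,\gamma,\alpha\gamma\delta\}_2=\{\alpha,\beta,\gamma,\delta\}_2$, which is the paper's computation $\{a,bc,c,ad\}_2=\{a,b,c,d\}_2$. Your step (i), which uses the product on $K_*(F)/2K_*(F)$ to show that the value depends only on the classes $S_1,S_2$, makes explicit a point the paper's short proof takes for granted.
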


\begin{proof}
It readily follows from Theorem \ref{t1}. It is enough to stress that an edge of type I between two vertices does not change the image of this sum under this map. Indeed, we have: $\{a,bc\}_2+\{c,ad\}_2\mapsto \{a,bc,c,ad\}_2=\{a,b,c,d\}_2$.
\end{proof}

\begin{lem}
If the Albert form $\left<a, b, ab, c, d, cd\right>$ is anisotropic, then the distance between $(\{a, b\}_2, \{c, d\}_2)$ and $(\{c, d\}_2, \{a, b\}_2)$ is bigger than $1$.
\label{l1}
\end{lem}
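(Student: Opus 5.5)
The plan is to show two things: the two vertices $(\{a,b\}_2,\{c,d\}_2)$ and $(\{c,d\}_2,\{a,b\}_2)$ are distinct, and they are not joined by an edge of either type. Both facts will come from one observation. If the Albert form is anisotropic, the pure subforms $\langle a,b,ab\rangle$ and $\langle c,d,cd\rangle$ cannot represent a common nonzero value. Indeed, if $a x_1^2+b x_2^2+ab x_3^2 = c y_1^2+d y_2^2+cd y_3^2\neq 0$, then, since $\operatorname{char}(F)=2$, the vector $(x_1,x_2,x_3,y_1,y_2,y_3)$ is an isotropic vector of $\langle a,b,ab,c,d,cd\rangle$.

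Next I translate equalities of symbols into statements about pure subforms. If $\{a,b\}_2=\{\alpha,\beta\}_2$ in $K_2(F)/2K_2(F)$, then by Kato's isomorphism $\langle\!\langle a,b\rangle\!\rangle\equiv\langle\!\langle \alpha,\beta\rangle\!\rangle \bmod I^3F$. The Arason--Pfister Hauptsatz then gives $\langle\!\langle a,b\rangle\!\rangle\cong\langle\!\langle \alpha,\beta\rangle\!\rangle$, because their difference is a form in $I^3F$ of dimension at most $8$ that is not anisotropic of dimension $8$ (or one can use the standard linkage argument for $2$-fold forms). Cancelling the common entry $\langle 1\rangle$ by Witt cancellation for bilinear Pfister forms, equivalently using uniqueness of the pure subform \cite{EKM}, yields $\langle a,b,ab\rangle\cong\langle \alpha,\beta,\alpha\beta\rangle$. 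In particular $\langle a,b,ab\rangle$ represents $\alpha$, and likewise for $\beta$ and $\alpha\beta$.

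Distinctness is now immediate. If $\{a,b\}_2=\{c,d\}_2$, then $\langle a,b,ab\rangle\cong\langle c,d,cd\rangle$, so both forms represent $a$, and the Albert form is isotropic. Now suppose an edge of type I or II joins the two vertices. By definition there are $\alpha,\beta,\gamma,\delta\in F^\times$ and $x\in F$ with $\{a,b\}_2=\{\alpha,\beta\}_2$ and $\{c,d\}_2=\{\alpha,(x^2+\alpha\gamma)\beta\}_2$. Here $x^2+\alpha\gamma\neq0$ is implicit, since the symbol must make sense. Both pure subforms then represent $\alpha$, which contradicts anisotropy. Note that only the first coordinate of each vertex is used, so the case split into types I and II is unnecessary.

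The only step needing care is the passage from equality of symbols mod $2$ to isometry of pure subforms in characteristic $2$. This is standard via Kato's theorem and the Hauptsatz, but it is where I would cite \cite{EKM} precisely.
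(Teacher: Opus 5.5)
Your proof is correct and is the paper's argument with the details filled in. An edge forces $\{a,b\}_2$ and $\{c,d\}_2$ to share the slot $\alpha$, and a common slot makes the Albert form isotropic; you also handle the distance-$0$ case, which the paper leaves implicit.

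One step should be tightened. In characteristic $2$ the Hauptsatz only gives Witt equivalence, and equal symbols need not give isometric Pfister forms (e.g.\ $\{1,b\}_2=\{1,1\}_2$ but $\langle\!\langle 1,b\rangle\!\rangle\not\cong\langle\!\langle 1,1\rangle\!\rangle$ for $b\notin F^2$). So say explicitly that anisotropy of $\langle a,b,ab\rangle$ forces $\langle\!\langle a,b\rangle\!\rangle$, and likewise $\langle\!\langle c,d\rangle\!\rangle$, to be anisotropic: that is what upgrades Witt equivalence to isometry of the Pfister forms and of their pure subforms.
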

\begin{proof}
If $(\{a, b\}_2, \{c, d\}_2)$ is at distance $1$ from $(\{c, d\}_2, \{a, b\}_2)$, then $\{a, b\}_2$ and $\{c, d\}_2$ share a common slot, and thus $\left<a, b, ab, c, d, cd\right>$ is isotropic.
\end{proof}

\begin{prop}
Suppose that the Albert form attached to $\tau$ has norm degree $16$. Then, the diameter of the graph $(V,E)$ is exactly 3.
\end{prop}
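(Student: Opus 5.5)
Theorem \ref{t1} already gives that any two vertices are joined by a path of at most three edges, so the diameter is at most $3$. It remains to exhibit two vertices at distance exactly $3$. We take $\tau=\{a,b\}_2+\{c,d\}_2$ with Albert form $\langle a,b,ab,c,d,cd\rangle$ of norm degree $16$. The two vertices will be $(\{a,b\}_2,\{c,d\}_2)$ and $(\{c,d\}_2,\{a,b\}_2)$, and we show their distance is neither $1$ nor $2$.

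\emph{Distance $1$ is excluded.} Norm degree $16$ forces the Albert form to be anisotropic. Indeed, if it were isotropic, the two quaternion symbols would share a slot, say $\tau=\{a,b'\}_2+\{a,d'\}_2=\{a,b'd'\}_2$. Then the Albert form would be Witt equivalent to a form in $F^2(a,b'd')$, so its norm field would have degree at most $4$. Lemma \ref{l1} then shows the distance is not $1$.

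\emph{Distance $2$ is excluded; this is the main step.} Suppose there is a vertex $(R_1,R_2)$ adjacent to both. An edge between $(S_1,S_2)$ and $(T_1,T_2)$ has the form $S_1=\{\alpha,\beta\}_2$, $T_1=\{\alpha,\beta'\}_2$, $S_2=\{\gamma,\delta\}_2$, $T_2=\{\gamma,\delta'\}_2$. So every edge makes $S_1$ share a slot with $T_1$, and $S_2$ share a slot with $T_2$. Hence $R_1$ shares a slot with $\{a,b\}_2$ and with $\{c,d\}_2$, and likewise $R_2$ shares a slot with both.

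Write $R_1=\{\alpha,\beta\}_2$, where $\alpha$ is a common slot with $\{a,b\}_2$, and rewrite it as $R_1=\{\gamma,\epsilon\}_2$, where $\gamma$ is a common slot with $\{c,d\}_2$. All these elements lie in the norm field $N=F^2(a,b,c,d)$. The aim is to show that $R_1$ being linked to both symbols forces a dependence among $a,b,c,d$ over $F^2$, contradicting $[N:F^2]=16$.

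To do this, I would express slots via the norm fields. The slots of $\{a,b\}_2$ are the elements of $F^2(a,b)^\times\setminus F^2$, and two symbols are equal only if their norm fields coincide; these are standard facts on bilinear Pfister forms in characteristic $2$. Since $R_1$ shares slots with both, its norm field $F^2(\alpha,\beta)=F^2(\gamma,\epsilon)$ meets $F^2(a,b)$ and $F^2(c,d)$ nontrivially. Then consider $R_2=\tau+R_1$. Its norm field must have degree $4$, because $R_2$ is a symbol, while $\tau$'s form contains the $16$-dimensional norm data.

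The hardest point is turning this into a clean contradiction. The concrete plan is to use the explicit edge formulas. The vertex adjacent to $(\{a,b\}_2,\{c,d\}_2)$ has first symbol $\{a',(x^2+a'c')b'\}_2$, with $a'\in F^2(a,b)$ and $c'\in F^2(c,d)$. Being adjacent to the swapped pair as well means this same symbol also equals $\{c'',\ldots\}_2$, with slots coming from $F^2(c,d)$ and $F^2(a,b)$ in the reverse roles. Comparing norm fields, both vertex conditions force $F^2(a',x^2+a'c')$ to be contained in a degree-$8$ subfield of $N$, and checking degrees then yields $[N:F^2]\le 8$, a contradiction.

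If this degree count proves awkward, the fallback is to show isotropy of $\tau\perp\langle 1\rangle$-type subforms. That would contradict anisotropy of the Albert form over $F(\sqrt{\ })$-type extensions preserving norm degree.
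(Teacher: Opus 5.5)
Your outline matches the paper's: Theorem~\ref{t1} gives the upper bound, you compare the vertex $(\{a,b\}_2,\{c,d\}_2)$ with its swap, Lemma~\ref{l1} rules out distance~$1$, and a norm-field count rules out distance~$2$. \textbf{However, the distance-$2$ step, which is the whole substance of the proposition, is never actually carried out.} Your one concrete claim there is that $F^2(a',x^2+a'c')$ lies in a degree-$8$ subfield of $N$. This is empty, since that field is just $F^2(a',c')$, and nothing follows from it. The remarks about the norm field of $R_2$ and the ``fallback'' are not arguments either. Slot-sharing alone cannot give the contradiction: $\{a,c\}_2$ shares a slot with both $\{a,b\}_2$ and $\{c,d\}_2$.

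What is missing is to combine \emph{one} slot-sharing condition with the explicit shape of $R_1$ forced by the first edge. That edge gives $R_1=\{a',b'(x^2+a'c')\}_2$ with $\{a',b'\}_2=\{a,b\}_2$ and $\{c',d'\}_2=\{c,d\}_2$. So the norm field $L=F^2(a',b'(x^2+a'c'))$ of $R_1$ has degree $4$ and lies inside $F^2(a',b',c')$, which has degree $8$ because $a',b',c',d'$ are $2$-independent. The second edge gives a common slot $\alpha$ of $R_1$ and $\{c',d'\}_2$. Hence $\alpha\in F^2(a',b',c')\cap F^2(c',d')$ and $\alpha\notin F^2$. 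Suppose $\alpha\in F^2(c')$. Then $c'\in F^2(\alpha)\subseteq L$, so the nonzero element $x^2+a'c'$ lies in $L$, and hence so does $b'$. Then $L\supseteq F^2(a',b',c')$, which is impossible by degrees. So $\alpha\notin F^2(c')$, which gives $F^2(c',d')=F^2(c',\alpha)\subseteq F^2(a',b',c')$. Therefore $F^2(a,b,c,d)=F^2(a',b',c',d')$ has degree at most $8$, a contradiction. This is exactly the paper's argument, and it is the core your proposal lacks.

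Two smaller inaccuracies:

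\begin{enumerate}
\item Your proof of anisotropy of the Albert form reads off its norm field from its Witt class. In characteristic $2$ the norm field of a bilinear form is not a Witt invariant: $\langle 1,a,a\rangle$ is Witt equivalent to $\langle 1\rangle$, because $\langle a,a\rangle$ is metabolic. Argue directly instead. A common slot $e$ would give $F^2(a,b)=F^2(e,b')$ and $F^2(c,d)=F^2(e,d')$, so $[F^2(a,b,c,d):F^2]\le 8$. Alternatively, note that $a,b,ab,c,d,cd$ are distinct monomials in the $2$-basis $a,b,c,d$, hence $F^2$-linearly independent.
\item The slots of $\{a,b\}_2$ are not all of $F^2(a,b)^\times\setminus F^2$. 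They are only the nonzero values of the pure part $\langle a,b,ab\rangle$; for instance, $1+a$ is in general not a slot. Since only the inclusion $F^2(e)\subseteq F^2(a,b)$ for a slot $e$ is ever used, this does no harm.
\end{enumerate}
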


\begin{proof} Take a vertice $(\{a,b\}_2,\{c,d\}_2)$. By our hypothesis on norm degree, we have $[F^2(a,b,c,d):F^2]=2^4$. It is enough to explain why $(\{a,b\}_2,\{c,d\}_2)$ is at distance no less than 3 from $(\{c,d\}_2,\{a,b\}_2)$. This distance is at least $2$ by Lemma \ref{l1}. Now, suppose that $(\{c,d\}_2,\{a,b\}_2)$ is at distance $2$ from $(\{a,b\}_2,\{c,d\}_2)$. Then, there are suitable scalars $a',b',c',d'$ and $x\in \{0, 1\}$, such that $(\{a,b\}_2,\{c,d\}_2)=(\{a',b'\}_2,\{c',d'\}_2)$ and $(\{c, d\}_2,\{a, b\}_2)$ is at distance 1 from$$(\{a',b'(x^2+a'c')\}_2,\{c',d'(x^2+a'c')\}_2).$$Then, there is at least one nonzero element $\alpha:=a'q^2+b'(x^2+a'c')r^2+a'b'(x^2+a'c')t^2$ which is common slot of $\{c, d\}_2$ and $\{a',b'(x^2+a'c')\}_2$. Hence, $F^2(\alpha)\subset F^2(c, d)\cap F^2(a', b', c')$. Moreover, using the uniqueness of the pure part of a bilinear Pfister form, the equalities $\{a,b\}_2=\{a',b'\}_2$ and $\{c,d\}_2=\{c',d'\}_2$ imply $F^2(a, b)=F^2(a', b')$ and $F^2(c, d)=F^2(c',d')$. Hence, $F^2(a, b, c, d)=F^2(a', b', c', d')$. Note that $\alpha\not\in F^2(c')$, otherwise we would get $c'\in F^2(a', b')$. Hence, $F^2(c', d')=F^2(\alpha, c')$, and thus $[F^2(a, b, c, d):F^2]<16$, a contradiction. This implies that  $(\{a,b\}_2,\{c,d\}_2)$ cannot be of distance 2 from $(\{c,d\}_2,\{a,b\}_2)$.
\end{proof}

\section{Single symbols in $K_2(F)/4K_2(F)$ of exponent 2}

In this section we provide a characteristic 2 analogue of \cite{Sivatski:2022}, tackling the question of when a sum of two symbols in $K_2(F)/2K_2(F)$ is a single symbol in $K_2(F)/4K_2(F)$.
Recall here that there is a short exact sequence (see \cite[Remark 2.32]{AravireJacobORyan:2018})
\begin{equation}
0 \rightarrow K_2(F)/2^m K_2(F) \rightarrow K_2(F)/2^nK_2(F) \rightarrow K_2(F)/2^{n-m} K_2(F) \rightarrow 0
\label{e1}
\end{equation}
where the embedding map is $\{a,b\}_{2^m} \mapsto \{a,b^{2^{n-m}}\}_{2^n}$, and the following map is $\{a,b\}_{2^n} \mapsto \{a,b\}_{2^{n-m}}$.
We consider the first map as an embedding of $K_2(F)/2^mK_2(F)$ inside $K_2(F)/2^nK_2(F)$, i.e., $\{a,b^{2^{n-m}}\}_2^n=\{a,b\}_{2^m}$.

\begin{lem}
If $\{a,b\}_{2^{n+1}} \in K_2(F)/2^{n+1}K_2(F)$ is of exponent dividing $2^n$, then its symbol length in $K_2(F)/2^n K_2(F)$ is at most 2.
\end{lem}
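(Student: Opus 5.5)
The plan is to turn the exponent hypothesis into the vanishing of $\{a,b\}_2$, solve that as a norm equation for $b$, and then rewrite $\{a,b\}_{2^{n+1}}$ explicitly as twice a sum of two symbols.

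\emph{Step 1: the mod-$2$ class vanishes.} The hypothesis says $2^n\{a,b\}_{2^{n+1}}=\{a,b^{2^n}\}_{2^{n+1}}=0$. In the sequence (\ref{e1}) with $m=1$, the left map sends $\{a,b\}_2$ to $\{a,b^{2^n}\}_{2^{n+1}}$. Since that map is injective, we get $\{a,b\}_2=0$ in $K_2(F)/2K_2(F)$.

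\emph{Step 2: $b$ is a norm.} Via Kato's isomorphism, $\{a,b\}_2=0$ means the Pfister form $\langle\!\langle a,b\rangle\!\rangle$ is metabolic, i.e.\ $\langle 1,a,b,ab\rangle$ is isotropic. From this I will extract the standard fact that $b$ is represented by $\langle 1,a\rangle$, say $b=x^2+ay^2$ with $x,y\in F$ not both zero. This uses the characteristic $2$ analogue of "the symbol splits iff $b$ is a norm from $F(\sqrt a)$", with $F(\sqrt a)$ purely inseparable. I expect this step to be the only place needing care, because of the degenerate case $a\in F^{\times 2}$. In that case, however, $\{a,b\}_{2^{n+1}}=2\{\sqrt a,b\}_{2^{n+1}}$, which is the image of the single symbol $\{\sqrt a,b\}_{2^n}$, so the claim is immediate there.

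\emph{Step 3: explicit rewriting.} Multiplication by $2$ on $K_2(F)/2^{n+1}K_2(F)$ satisfies $2\{u,v\}_{2^{n+1}}=\{u,v^2\}_{2^{n+1}}$, which is exactly the image of $\{u,v\}_{2^n}$ under the embedding of (\ref{e1}). It therefore suffices to write $\{a,b\}_{2^{n+1}}$ as twice a sum of at most two symbols. I split into cases.
\begin{itemize}
\item If $y=0$, then $\{a,x^2\}=2\{a,x\}$.
\item If $x=0$, then $\{a,ay^2\}=\{a,a\}+2\{a,y\}$. Here $\{a,a\}=\{a,-1\}=\{a,1\}=0$ because $-1=1$ in characteristic $2$.
\item If $xy\neq0$, put $t=y/x$, so that $b=x^2(1+at^2)$. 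The Steinberg relation gives $\{at^2,1+at^2\}=0$. Hence
\[\{a,b\}=2\{a,x\}+\{a,1+at^2\}=2\{a,x\}-2\{t,1+at^2\}=2\bigl(\{a,x\}+\{t^{-1},1+at^2\}\bigr).\]
\end{itemize}

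\emph{Conclusion.} Let $\xi\in K_2(F)/2^nK_2(F)$ be the class whose image under the embedding of (\ref{e1}) is $\{a,b\}_{2^{n+1}}$. This $\xi$ is unique by injectivity, which is what makes "its symbol length in $K_2(F)/2^nK_2(F)$" meaningful. From the cases above, $\xi=\{a,x\}_{2^n}+\{t^{-1},1+at^2\}_{2^n}$, or $\xi$ is a single symbol in the degenerate cases. In every case $\xi$ is a sum of at most two symbols, which proves the lemma.
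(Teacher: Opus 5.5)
Your proof is correct and follows the same route as the paper: the exact sequence (\ref{e1}) forces $\{a,b\}_2=0$, so $b=x^2+ay^2$, and a Steinberg-relation computation then exhibits $\{a,b\}_{2^{n+1}}$ as the image of a sum of two symbols of $K_2(F)/2^nK_2(F)$. Your write-up is more careful than the paper's in two ways: you treat the degenerate cases $a\in F^{\times 2}$ and $xy=0$, which the paper skips by simply assuming $x,y\in F^\times$; and your identity $\{a,1+at^2\}=-2\{t,1+at^2\}$ avoids the small sign slips in the paper's displayed chain, which are harmless for the count.
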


\begin{proof}
Let $\{a,b\}_{2^{n+1}} \in K_2(F)/2^{n+1}K_2(F)$ be of exponent $2^n$. Using the sequence (\ref{e1}), it follows $\{a,b\}_{2}=0$, which means $b=x^2+ay^2$ for $x, y\in F^{\times}$. Hence,
\begin{eqnarray*}
\{a,b\}_{2^{n+1}} &= &\{a,x^2+ay^2\}_{2^{n+1}}\\&=&\{ay^2,x^2+ay^2\}_{2^{n+1}}+\{y^2,x^2+ay^2\}_{2^{n+1}}\cr &=&\{x^2,(x^2+ay^2)ay^2\}_{2^{n+1}}+\{y^2,x^2+ay^2\}_{2^{n+1}}\cr 
&=& \{x,(x^2+ay^2)a\}_{2^n}+\{y,x^2+ay^2\}_{2^n}.
\end{eqnarray*}
\end{proof}

The case of $n=1$ in this lemma serves as a partial analogue of Albert's theorem on degree 4 exponent 2 central simple algebras (see \cite[Page 174, Theorem 2]{Albert:1968}).

\begin{thm}
A sum of two symbols $\tau=\{a,b\}_2+\{c,d\}_2\in K_2(F)/2K_2(F)$ is equal to a single symbol in $K_2(F)/4K_2(F)$ if and only if $\{a,b,c,d\}$ is trivial in $K_4(F)/2K_4(F)$.
\end{thm}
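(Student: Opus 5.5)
The plan is to treat $\tau$ as an element of $K_2(F)/4K_2(F)$ through the embedding of (\ref{e1}), that is, $\{a,b\}_2\mapsto\{a,b^2\}_4$. The invariant $\{a,b,c,d\}_2$ is well defined on sums of two symbols by the Corollary to Theorem \ref{t1}, and I will use it in both directions.

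\emph{Necessity.} Suppose $\tau=\{x,y\}_4$. Then $\{x,y\}_4$ has exponent dividing $2$, so by (\ref{e1}) its image $\{x,y\}_2$ vanishes, i.e.\ $y=u^2+xv^2$ for some $u,v\in F$. The computation in the proof of the preceding lemma (case $n=1$) then rewrites $\tau=\{u,xy\}_2+\{v,y\}_2$ as a sum of two symbols. If $u$ or $v$ is $0$ we already have one symbol and the invariant is trivial. Otherwise the well-definedness of the invariant gives
\[
\{a,b,c,d\}_2=\{u,xy,v,y\}_2=\{u,x,v,y\}_2+\{u,y,v,y\}_2 .
\]
The second term vanishes because $\{y,y\}=\{y,-1\}$ and $-1=1$ in characteristic $2$. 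The first term vanishes because $\{x,y\}_2=0$. Hence the invariant is trivial.

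\emph{Sufficiency.} Assume $\{a,b,c,d\}_2=0$, i.e.\ $\langle\!\langle a,b,c,d\rangle\!\rangle\in I^5F$. Then this form is hyperbolic, which in characteristic $2$ means $[F^2(a,b,c,d):F^2]\le 8$. The aim is to reverse the computation above: find $x,u,v$ with $y=u^2+xv^2$ and $\tau=\{u,xy\}_2+\{v,y\}_2$, since then $\tau=\{x,y\}_4$.
\begin{itemize}
\item \emph{Linked case.} If $\{a,b\}_2$ and $\{c,d\}_2$ share a slot, equivalently the Albert form $\langle a,b,ab,c,d,cd\rangle$ is isotropic, then $\tau=\{a,e\}_2$ for some $e$. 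A single symbol mod $2$ is $\{a,e^2\}_4$, which is a single symbol in $K_2(F)/4K_2(F)$.
\item \emph{Reduction.} Otherwise, after reordering I may assume $d\in F^2(a,b,c)$. Writing $d=p+cq$ with $p,q\in F^2(a,b)$, I will use type I and type II moves from Theorem \ref{t1}, which preserve both $\tau$ and the invariant. The goal is to replace the pair by $(\{\alpha,\beta\}_2,\{\gamma,\delta\}_2)$ whose slots satisfy a relation of the shape $\delta=\mu^2+\alpha\gamma\nu^2$, up to squares.
\item \emph{Finishing.} I then try to match that relation with $y=u^2+xv^2$ by taking $y=\delta$, $x=\alpha\gamma$, and reading off $u,v$.
\end{itemize}

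The main obstacle is this reduction: turning the degree drop $[F^2(a,b,c,d):F^2]\le 8$ into an explicit representation $\tau=\{u,xy\}_2+\{v,y\}_2$ with $y=u^2+xv^2$. A purely linear count does not force linkage, so the Albert form may still be anisotropic. My first attempt will be to expand $d$ in the $F^2$-basis $1,a,b,ab$ of $F^2(a,b)$ over $F^2(c)$, and then use bilinear Pfister identities of the form $\{c,s^2+ct^2\}_2=0$ to absorb the coefficients step by step.
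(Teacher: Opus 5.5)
Your necessity argument is correct, and a bit more direct than the paper's. Where the paper bounds $[F^2(\alpha,x,y):F^2]\le 8$, you expand $\{u,xy,v,y\}_2$ and kill the two pieces using $\{y,y\}=\{y,-1\}=0$ and $\{x,y\}\in 2K_2(F)$. You also handle $u=0$ or $v=0$, which the paper glosses over.

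The sufficiency direction, however, is not proved, and the route you sketch cannot succeed as stated. Take a pair $(\{\alpha,\beta\}_2,\{\gamma,\delta\}_2)$ with $\delta=\mu^2+\alpha\gamma\nu^2$, even up to squares. Then $\{\alpha\gamma,\delta\}_2=0$, so $\{\gamma,\delta\}_2=\{\alpha,\delta\}_2$ and $\tau=\{\alpha,\beta\delta\}_2$ is a single symbol modulo $2$. That is exactly what fails in the non-linked case, which is the only case you still have to treat. Moreover, setting $x=\alpha\gamma$, $y=\delta$ gives $\{\alpha\gamma,\delta\}_4=\{\mu,\alpha\gamma\delta\}_2+\{\nu,\delta\}_2$, which has no reason to equal $\tau$. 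In the shape $\{u,xy\}_2+\{v,y\}_2$ with $y=u^2+xv^2$, the link runs through the \emph{first} slots via the Steinberg relation. The right target is $\tau=\{1+\gamma,\beta\}_2+\{\gamma,\delta\}_2$, which equals $\{\gamma\beta^2,(1+\gamma)\delta^2\}_4$ because $\{\gamma,1+\gamma\}=0$.

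Your degree bound is the right input; what is missing is choosing a particular linear dependence and a twist that turns it into such a common slot. The nine entries of $\langle 1,ac,ad,acd,a,ab,c,d,cd\rangle$ lie in $F^2(a,b,c,d)$, so this quasi-Pfister neighbor of $\langle\!\langle a,b,c,d\rangle\!\rangle$ is isotropic. Write the relation as $\lambda+au^2+abv^2+w=0$, where:
\begin{itemize}
\item[(i)] $\lambda=q^2+a(cr^2+ds^2+cdt^2)$ is a value of $\langle 1,ac'\rangle$, nonzero because the Albert form is anisotropic;
\item[(ii)] $w$ is a value of the pure part of $\{c,d\}_2=\{c',d'\}_2$.
\end{itemize}
Since $\{ac',\lambda\}_2=0$, you may replace $(\{a,b\}_2,\{c',d'\}_2)$ by $(\{a,\lambda b\}_2,\{c',\lambda d'\}_2)$ without changing $\tau$. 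Dividing by $\lambda$ and using $\langle\lambda a,\lambda c'\rangle\cong\langle a,c'\rangle$ turns the relation into an isotropic vector of $\langle 1,a,\lambda ab\rangle\perp\langle c',\lambda d',\lambda c'd'\rangle$. This gives a common value $\gamma$ of the two pieces, with $\{c',\lambda d'\}_2=\{\gamma,\delta\}_2$. After normalizing the isotropic vector, either $\gamma$ or $1+\gamma$ is a slot of $\{a,\lambda b\}_2$. The first option would make $\tau$ a single symbol mod $2$, so $\{a,\lambda b\}_2=\{1+\gamma,\beta\}_2$, which is the required shape.
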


\begin{proof}
Suppose that $\tau$ is a symbol in $K_4(F)/2K_4(F)$. This means that $\{a, b^2\}+\{c, d^2\}=\{\alpha,\beta\}_4$ for suitable $\alpha, \beta \in F^{\times}$. Then, we get $\{\alpha,\beta\}_2=0$ by the sequence (\ref{e1}). The previous remark yields$$\tau=\{x,(x^2+\alpha y^2)\alpha\}_2+\{y,x^2+\alpha y^2\}_2,$$for suitable $x, y\in F^{\times}$. This means $\{a,b,c,d\}=\{x,(x^2+\alpha y^2)\alpha,y,x^2+\alpha y^2\}$ for some $x,y \in F$. Since $[F^2(x,(x^2+\alpha y^2)\alpha,y,x^2+\alpha y^2):F^2]= [F^2(\alpha,x,y):F^2]\leq 8$, the symbol $\{a,b,c,d\}$ must be trivial in $K_4(F)/2K_4(F)$.

In the opposite direction, suppose $\{a,b,c,d\}$ is trivial in $K_4(F)/2K_4(F)$. Then $\langle \! \langle a,b,c,d \rangle \! \rangle$ is isotropic. If $\tau$ is equal to one symbol in $K_2(F)/2K_2(F)$ then clearly it is equal to one symbol in $K_2(F)/4K_2(F)$, so assume it is not. In particular, the Albert form $\varphi=\langle a,b,ab,c,d,cd\rangle$ is anisotropic. Moreover, the isotropy of $\langle \! \langle a,b,c,d \rangle \! \rangle$ implies that its quasi-Pfister neighbor $$\langle 1,ac,ad,acd,a,ab,c,d,cd\rangle$$is isotropic.
Therefore, $q^2+a(cr^2+ds^2+cdt^2)+au^2+abv^2+cx^2+dy^2+cdz^2=0$ for some $q,r,s,t,u,v,x,y,z\in F$ not all zero.
Since $\varphi$ is anisotropic, we have $\lambda:=q^2+a(cr^2+ds^2+cdt^2)$ is not zero. Moreover, the scalars $cr^2+ds^2+cdt^2$ and $cx^2+dy^2+cdz^2$ are not simultaneously zero. Take $c'$ a nonzero scalar among them. Therefore, we can choose $d'$ such that $\{c,d\}_2=\{c',d'\}_2$ and $\lambda+au^2+abv^2+c'\ell^2+d'm^2+c'd'n^2=0$ for some $\ell,m,n \in F$.
Dividing by $\lambda$, we obtain that the form $\psi=\langle 1,\lambda a,\lambda ab,\lambda c',\lambda d',\lambda c'd' \rangle$ is isotropic.

Since $\langle 1,ac'\rangle$ represents $\lambda$, we have $\langle 1,ac'\rangle = \lambda \langle 1,ac' \rangle$, which means $\langle \lambda a,\lambda c'\rangle=\langle a,c'\rangle$.
Therefore,
$\psi=\langle 1,a,\lambda ab,c',\lambda d',\lambda c'd' \rangle$.
Since $\psi$ is isotropic, there exists an element $\gamma$ that is represented both by $\langle 1,a,\lambda ab \rangle$ and by $\langle c',\lambda d',\lambda c'd' \rangle$. 
Therefore, there exist $\delta,\beta \in F^\times$ such that $\{c',\lambda d'\}_2=\{\gamma,\delta\}_2$ and either $\{a,\lambda b\}_2=\{1+\gamma,\beta\}_2$ or $\{\gamma,\beta]_2$.
The second option is impossible, because that would mean that $\tau=\{a,b\}_2+\{c,d\}_2=\{a,b\}_2+\{c',d'\}_2+\{a,\lambda b\}_2+\{c',\lambda d'\}_2=\{\gamma,\delta \beta\}_2$ is equal to one symbol in $K_2(F)/2K_2(F)$, and we assumed it is not. 
Hence, $\{\gamma,\delta\}_2=\{\gamma,\delta^2\}_4=\{\gamma,\delta^2(\gamma+1)\}_4$, whereas $\{1+\gamma,\beta\}_2=\{(1+\gamma)\delta^2,\beta\}_2=\{(1+\gamma)\delta^2,\beta^2\}_4$. 
Consequently, $\tau=\{\gamma,\delta^2(\gamma+1)\}_4+\{(1+\gamma)\delta^2,\beta^2\}_4=\{\gamma \beta^2,\delta^2(\gamma+1)\}_4$.

\end{proof}

\begin{rem}
To complete the picture, one may want to know what happens for biquaternion algebras in characteristic 2. It is well-known that they are cyclic of degree 4, but here is an easy explanation: given a biquaternion algebra $[a,b)_2 +[c,d)_2$,
$[a,b)_2=[a+x^2+x,b)_2$ and $[c,d)_2=[c\frac{d+x^2}{d},d+x^2)_2$, so by taking $x=a+d$, we get $a+x^2+x=d+x^2$. Hence, we could assume from the start that $a=d$. 
Now, $[a,b)_2=[(a,0),b^2)_4=[(a,0),ab^2)_4$ and $[c,a)_2=[c,ab^2)_2=[(0,c),ab^2)_4$, and so $[a,b)_2+[c,d)_2=[(a,0),ab^2)_4+[(0,c),ab^2)_4=[(a,c),ab^2)_4$.
For the notation and further background, see \cite{MammoneMerkurjev:1991} and \cite{Chapman:2026}.
\end{rem}

\section{Symbol Length}

The symbol length of an element $\omega \in K_n(F)/2^m K_n F$ is the minimal number of symbols in $K_n(F)/2^m K_n F$ required to express it.
In this section we study what bounds on the symbol length of $\omega$ if we know its symbol length in $K_n(F)/2^{m+1} K_n(F)$.

\begin{thm}\label{bounds}
Consider a class $\omega \in K_2(F)/2^m K_2(F)$.
\begin{itemize}
\item[(a)] If its symbol length in $K_2(F)/2^{m+1}K_2(F)$ is 2, then  its symbol length in $K_2(F)/2^{m} K_2(F)$ is at most 5.
\item[(b)] If its symbol length in $K_2(F)/2^{m+1}K_2(F)$ is 3, then  its symbol length in $K_2(F)/2^{m} K_2(F)$ is at most 15.
\item[(c)] If its symbol length in $K_2(F)/2^{m+1}K_2(F)$ is 4, then  its symbol length in $K_2(F)/2^{m} K_2(F)$ is at most 40.
\end{itemize}
\end{thm}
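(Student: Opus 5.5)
The plan is to read $\omega\in K_2(F)/2^mK_2(F)$ through the exact sequence (\ref{e1}) with $n=m+1$. The class $\omega$, viewed in $K_2(F)/2^{m+1}K_2(F)$, is a sum $\sum_{i=1}^k\{a_i,b_i\}_{2^{m+1}}$ with $k\in\{2,3,4\}$, and exactness gives $\sum_i\{a_i,b_i\}_2=0$ in $K_2(F)/2K_2(F)$. The basic tool is a \emph{slot change}. Whenever $\{x,y\}_2=\{x,y'\}_2$, we have $y'=y(s^2+xt^2)$, and hence
\[\{x,y'\}_{2^{m+1}}=\{x,y\}_{2^{m+1}}+\{x,s^2+xt^2\}_{2^{m+1}}.\]
The correction term $\{x,s^2+xt^2\}_{2^{m+1}}$ has exponent dividing $2^m$. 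The lemma preceding the last theorem then writes it explicitly as a sum of at most two symbols in $K_2(F)/2^mK_2(F)$. By symmetry of symbols the same holds for changes in the first slot.

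So every elementary move among mod $2$ representatives of a symbol costs at most $2$ symbols in $K_2(F)/2^mK_2(F)$. The strategy is to use such moves until the $k$ symbols collapse, modulo $2^{m+1}$, to a single symbol of exponent dividing $2^m$. That last symbol is absorbed by the same lemma at a further cost of $2$.

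\textbf{(a)} Here $\{a_1,b_1\}_2=\{a_2,b_2\}_2$. By the common slot property of equal $2$-fold bilinear Pfister forms, we rewrite both symbols with a common first slot $a$, say $\{a,b\}$ and $\{a,b'\}$. We then add them to the single symbol $\{a,bb'\}_{2^{m+1}}$, whose mod $2$ class is trivial, and apply the lemma. The crude count gives $2+2+2$. To reach $5$, I would choose the common slot to already be a slot of one of the two symbols. Then only the other symbol needs (at most two) moves, and one of these moves should merge with the final collapse. Concretely, the last slot change and the final exponent-$2^m$ symbol have the same first slot, so their norm factors multiply into a single element $s^2+at^2$; the count then becomes $2+2+1$ after one cancellation. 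This bookkeeping, pinning down the exact constant $5$ rather than $6$, is the step I expect to be most delicate.

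\textbf{(b) and (c)} I would argue by reduction in $k$. For $k=3$, $\{a_1,b_1\}_2+\{a_2,b_2\}_2=\{a_3,b_3\}_2$ is a sum of two symbols equal to one symbol. The Albert form $\langle a_1,b_1,a_1b_1,a_2,b_2,a_2b_2\rangle$ is then isotropic, so the two symbols share a slot modulo $2$. Rewriting the first two symbols with a common slot costs a bounded number of moves, and merges them into one symbol mod $2^{m+1}$. We are then in case (a) with extra correction symbols; the bookkeeping is arranged so that the total is $3\cdot 5=15$.

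For $k=4$ the relation $\{a_1,b_1\}_2+\{a_2,b_2\}_2=\{a_3,b_3\}_2+\{a_4,b_4\}_2$ is exactly the situation of Theorem~\ref{chain}. The two vertices are joined by at most four type I edges. Each type I edge changes slots in both symbols, i.e.\ replaces $\{\alpha,\beta\}+\{\gamma,\delta\}$ by $\{\alpha,\beta\lambda\}+\{\gamma,\delta\lambda\}$ with $\lambda=\alpha\gamma$, and costs a bounded number of correction symbols. At the end the four symbols cancel in pairs up to the correction terms. The count of these corrections, with the constant of (a) reused at the last step, should yield $40$.

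The main obstacle throughout is the sharp counting. I would first verify the naive bounds ($6$, $18$, roughly $48$) and then look for the cancellations, via common first slots, that bring them down to $5$, $15$ and $40$.
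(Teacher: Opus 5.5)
Part (b) is essentially the paper's argument. Parts (a) and (c), however, each have a genuine gap exactly where the constants $5$ and $40$ are decided.

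\textbf{Part (a).} If every move is a one-slot change costing $2$, the count is $6$, and the merging you propose does not occur. To give $\{\alpha,\beta\}$ the first slot $\gamma$ of $\{\gamma,\delta\}$, you generally need a second-slot move followed by a first-slot move. Once the first slot is $\gamma$, no further second-slot move is needed, because $\{\gamma,e\}+\{\gamma,\delta\}=\{\gamma,e\delta\}$ exactly. So ``merging'' a last move into the collapse only deletes a move that was never necessary. The first-slot move, meanwhile, leaves a correction of the form $\{\gamma/\alpha,e\}$, which shares no slot with $\{\gamma,e\delta\}$.

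The paper reaches $5$ using two facts missing from your toolkit:
\begin{itemize}
\item Changing a slot by a square factor costs only one symbol, since $\{x^2,\beta\}_{2^{m+1}}$ is the image of $\{x,\beta\}_{2^m}$.
\item Because $-1=1$, the identity $\{u,v\}=\{u+v,v/u\}$ already holds in $K_2(F)$, so it changes both slots at no cost.
\end{itemize}
Write $\gamma=\alpha x^2+\beta y^2+\alpha\beta z^2$, $u=\alpha x^2$ and $v=\beta(y^2+\alpha z^2)$, so that $u+v=\gamma$. Then
\[\omega=\{x^{-1},\beta\}_{2^m}-\{\alpha x^2,y^2+\alpha z^2\}_{2^{m+1}}+\{\gamma,\delta v/u\}_{2^{m+1}}.\]
The last two symbols have exponent dividing $2^m$, so the lemma gives the bound $1+2+2$. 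The paper writes $\alpha\beta(y^2+\alpha z^2)\delta$ in the last slot. That differs from $\delta v/u$ by the square $(\alpha x)^2$, and only $\delta v/u$ makes the decomposition exact; the count is the same.

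\textbf{Part (c).} Here you put the cost in the wrong place. A type I edge is an identity in $K_2(F)$. Using $\{\alpha,\alpha\}=\{\alpha,-1\}=0$,
\[\{\alpha,\alpha\gamma\beta\}+\{\gamma,\alpha\gamma\delta\}=\{\alpha,\beta\}+\{\gamma,\delta\}+\{\alpha,\gamma\}+\{\gamma,\alpha\},\]
and the last two terms cancel. A type II edge with $x=1$ is exact as well: $\{e,1+eg\}+\{g,1+eg\}=\{eg,1+eg\}$, which vanishes by the Steinberg relation since $1+eg=1-eg$.

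What actually costs symbols is re-representation. Vertices are pairs of classes modulo $2$. At every vertex, the symbols output by the incoming edge must be exchanged for the representatives fed to the outgoing edge; at the two ends they are exchanged for the given symbols. Each exchange is a two-symbol element with trivial image modulo $2$, so it costs at most $5$ by (a). A path with $L$ edges therefore gives at most $10(L+1)$ symbols. Your four type I edges thus yield $50$, not $40$. To get $40$ you need the first assertion of Theorem~\ref{chain}: a path of at most three edges, with at most one of type II, together with the exactness of that type II edge. This gives $8\cdot 5$.

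\textbf{Part (b).} State explicitly that $-\{c,d\}-\{c,e\}+\{c,de\}=0$. Then $\omega$ splits exactly into the three differences $\{a_1,b_1\}-\{c,d\}$, $\{a_2,b_2\}-\{c,e\}$ and $\{a_3,b_3\}+\{c,de\}$, each costing at most $5$ by (a). Argue this way rather than by counting one-slot moves.
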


\begin{proof}
If $\omega=\{\alpha,\beta\}_{2^{m+1}}+\{\gamma,\delta\}_{2^{m+1}}$, then $\{\alpha,\beta\}_2=\{\gamma,\delta\}_2$. Since $\{\alpha,\beta\}_{2^{m+1}}\neq 0$ because $\omega$ is of length $2$, we obtain $\left<\alpha, \beta, \alpha\beta\right>\cong \left<\gamma, \delta, \gamma\delta\right>$ (the uniqueness of the pure part of bilinear Pfister forms). Hence, $\gamma=\alpha x^2+\beta y^2+\alpha \beta z^2$ for some $x,y,z\in F$, and we have
$$\{\alpha,\beta\}_2=\{\alpha x^2,\beta(y^2+\alpha z^2)\}_2=\{\gamma,\alpha\beta(y^2+\alpha z^2)\}_2=\{\gamma,\delta\}_2.$$
Now, 
\begin{equation*}
\omega=\underbrace{\{\alpha,\beta\}_{2^{m+1}}-\{\alpha x^2,\beta\}_{2^{m+1}}}_{A}+ \underbrace{\{\alpha x^2,\beta\}_{2^{m+1}}-\{\alpha x^2,\beta(y^2+\alpha z^2)\}_{2^{m+1}}}_{B}+\underbrace{\{\gamma,\alpha\beta(y^2+\alpha z^2)\}_{2^{m+1}}+\{\gamma,\delta\}_{2^{m+1}}}_{C}.
\end{equation*}
Clearly, $A=\{x^{-2},\beta\}_{2^{m+1}}$ which is the same as $\{x^{-1},\beta\}_{2^m}$. We have $B=-\{\alpha x^2,y^2+\alpha z^2\}_{2^{m+1}}$ and $C=\{\gamma,\delta \alpha\beta(y^2+\alpha z^2)\}_{2^{m+1}}$ which are both single symbols in $K_1(F)/2^{m+1}K_1(F)$. The exponents of $B$ and $C$ divide $2^m$, because their $2^m$th powers are $\{\alpha x^2,\beta(y^2+\alpha z^2)\}_2-\{\gamma,\alpha\beta(y^2+\alpha z^2)\}_2=0$ and $\{\gamma,\alpha\beta(y^2+\alpha z^2)\}_2+\{\gamma,\delta\}_2=0$, respectively. Therefore, each is equal to a sum of up to two symbols in $K_2(F)/2^m K_2(F)$. This completes part $(a)$.

Now, if $\omega=\{\alpha,\beta\}_{2^{m+1}}+\{\gamma,\delta\}_{2^{m+1}}+\{a,b\}_{2^{m+1}}$, then $\{\alpha,\beta\}_{2}+\{\gamma,\delta\}_{2}+\{a,b\}_{2}=0$. This implies that there exist $c,d,e \in F^\times$, such that $\{\alpha,\beta\}_2 =\{c,d\}_2$, $\{\gamma,\delta\}_2=\{c,e\}_2$ and $\{a,b\}_2=\{c,ed\}_2$.
Hence, $\omega$ can be written as 
\begin{eqnarray*}
\{\alpha,\beta\}_{2^{{m}+1}}-\{c,d\}_{2^{{m}+1}}\\
+\{\gamma,\delta\}_{2^{m+1}}-\{c,e\}_{2^{{m}+1}}\\
+\{a,b\}_{2^{{m}+1}}+\{c,de\}_{2^{{m}+1}}.
\end{eqnarray*}
Each term is a sum of two symbols in $K_2 F/2^{{m}+1} K_2 F$ of exponent dividing $2^{{m}}$, and thus is the sum of up to 5 symbols in $K_{{2}}F/2^{{m}} K_{{2}} F$ by part $(a)$. Therefore, $\omega$ is the sum of up to 15 symbols in $K_{{2}} F/2^{{m}} K_{{2}} F$. This completes part $(b)$.

If $\omega=\{\alpha,\beta\}_{2^{n+1}}+\{\gamma,\delta\}_{2^{n+1}}+\{a,b\}_{2^{n+1}}+\{c,d\}_{2^{n+1}}$, then $\{\alpha,\beta\}_2+\{\gamma,\delta\}_2=\{a,b\}_2+\{c,d\}_2$. By Theorem \ref{chain}, there exist $q,r,s,t,w,x,y,z,e,f,g,h \in F$ such that $\{\alpha,\beta\}_2=\{q,r\}_2$, $\{\gamma,\delta\}_2=\{s,t\}_2$, $\{a,b\}_2=\{w,x\}_2$, $\{c,d\}_2=\{y,z\}_2$, $\{q,rs\}_2=\{e,f\}_2$, $\{s,qt\}_2=\{g,h\}_2$, and either $(i)$ $\{e,fg\}_2=\{w,x\}_2$ and $\{g,eh\}_2=\{y,z\}_2$ or  $(ii)$ $\{e,(1+eg)f\}_2=\{w,x\}_2$ and $\{g,(1+eg)h\}=\{y,z\}_2$. 
Suppose $(i)$. Then $\omega$ can be written as the sum of
\begin{eqnarray*}
\{\alpha,\beta\}_{2^{m+1}}-\{q,r\}_{2^{m+1}}\\
\{\gamma,\delta\_{2^{m+1}}-\{s,t\}_{2^{m+1}}\\
\{q,rs\}_{2^{m+1}}-\{e,f\}_{2^{m+1}}\\
\{s,qt\}_{2^{m+1}}-\{g,h\}_{2^{m+1}}\\
\{e,fg\}_{2^{m+1}}-\{w,x\}_{2^{m+1}}\\
\{g,eh\}_{2^{m+1}}-\{y,z\}_{2^{m+1}}\\
\{w,x\}_{2^{m+1}}+\{a,b\}_{2^{m+1}}\\
\{y,z\}_{2^{m+1}}+\{c,d\}_{2^{m+1}}.
\end{eqnarray*}
Each line is the sum of up to 5 symbols in $K_2 F/2^{m} K_2 F$ by part $(a)$. Therefore, $\omega$ is the sum of up to 40 symbols in $K_2 F/2^{m} K_2 F$. The case of $(ii)$ is similar.
\end{proof}

\begin{rem}
Note that in the analogous statement to part $(c)$ of Theorem \ref{bounds} in \cite{Chapman:2023}, i.e., Theorem 3.5 of that paper, the bound can be reduced from 46 to 40 by the refinement of Sivatski's chain lemma as we presented here and can be easily adapted to the case of characteristic not 2 without any significant change.
\end{rem}

\bibliographystyle{abbrv}
\bibliography{bibfile}
\end{document}